\documentclass{amsart}
\usepackage{xcolor}
\usepackage{amsmath,amsfonts,amssymb,color,hyperref, enumerate,tabularx}

\usepackage{amsmath, amsfonts,amsthm,amssymb,amscd, verbatim,graphicx,color,multirow,tikz,tikz-cd,booktabs, caption, mathdots,bm,chngcntr,adjustbox,longtable%,enumitem
}
\usepackage{tikz-cd}
\usetikzlibrary{positioning}
\newtheorem{theorem}{Theorem}[section]
\newtheorem{lemma}[theorem]{Lemma}
\newtheorem{proposition}[theorem]{Proposition}

\newtheorem{definition}[theorem]{Definition}

\newtheorem{remark}[theorem]{Remark}

\newcommand{\agl}{\mathop{\mathrm{AGL}}}

% complexes

%\def\Zent#1{{\bf Z}({{#1}})}

\begin{document}

\title[The density of the Sylow $p$-numbers]{On the density of Sylow numbers}

\author[A.~Lucchini]{Andrea Lucchini}
\address{Universit\`a di Padova, Dipartimento di Matematica
``Tullio Levi-Civita'', Via Trieste 63, 35121 Padova, Italy}
\email{lucchini@math.unipd.it}

\author[P.~Spiga]{Pablo Spiga}
\address{Dipartimento di Matematica e Applicazioni, University of Milano-Bicocca, Via Cozzi 55, 20125 Milano, Italy} 
\email{pablo.spiga@unimib.it}
\begin{abstract}
	Let $p$ be a prime number. We say that a positive integer $n$ is a
	\textit{\textbf{Sylow $p$-number}} if there exists a finite group having
	exactly $n$ Sylow $p$-subgroups. When $p=2$, every odd integer is a
	Sylow $2$-number. In contrast, when $p$ is odd, there exist two positive constants $c_p$ and $c_p^\prime$ such that, denoting by $\beta(p,x)$ the number of Sylow $p$-numbers less than or equal to $x$, 
	\[c_p\,x(\log x)^{\frac{1}{p-1}-1}
	\leq \beta(p,x)\leq 
	c_p^\prime\,x(\log x)^{\frac{1}{p-1}-1}.
	\]
	Moreover if  $\beta_s(p,x)$ is the number of positive integers $n\le x$  such that $n$
	is the Sylow $p$-number of some finite solvable group
	then $$\beta_s(p,x)\sim  c_p\,x(\log x)^{\,\frac{1}{p-1}-1}
	\qquad\text{as } x\to\infty.$$
	In particular, when $p$ is odd, the natural density of Sylow $p$-numbers
	is $0$.
\end{abstract}
\keywords{solvable group, Sylow number, Sylow subgroup}

\subjclass[2020]{Primary 20D20}

\maketitle
\section{Introduction}

In view of Sylow's theorems, fixing a prime $p$, it is natural to ask
whether every positive integer $n \equiv 1 \pmod p$ is a
\textit{\textbf{Sylow $p$-number}}, that is, whether $n$ is the number
of Sylow $p$-subgroups of some finite group.
Every odd integer $n$ is a Sylow $2$-number for the dihedral group of
order $2n$.
For odd primes $p$, however, the question is more delicate.

Philip Hall~\cite{phall} observed that in solvable groups the prime
factorization of a Sylow $p$-number
\[
n = p_1^{a_1}\cdots p_s^{a_s}
\]
satisfies $p_i^{a_i}\equiv 1 \pmod p$ for $i = 1,\dots, s$.
For example, no solvable group has exactly six Sylow $5$-subgroups. Although six is a Sylow $5$-number because the alternating group of degree 5 has six Sylow $5$-subgroups.

About forty years later, Marshall Hall~\cite{mhall} reduced the
determination of the Sylow $p$-numbers to simple groups.
More precisely, he showed that every Sylow $p$-number is a product of
prime powers $q^t \equiv 1 \pmod p$ and Sylow $p$-numbers of non-abelian
simple groups.
Conversely, every such product is in fact a Sylow $p$-number, which can
be seen by taking suitable direct products of affine groups and simple
groups.

Indeed, assume
\[
n=p_1^{a_1}\cdots p_t^{a_t}\cdot s_1\cdots s_u,
\]
where $p_i^{a_i}\equiv 1 \pmod p$ for $i = 1,\dots, t$, and $s_j$ is the
Sylow $p$-number of a non-abelian simple group $S_j$.
Then $n$ is the Sylow $p$-number of the direct product
\[
\agl(1,p_1^{a_1})\times \cdots \times \agl(1,p_t^{a_t})
\times S_1\times \cdots \times S_u.
\]

Using the previous result, Marshall Hall proved that not every positive
integer $n \equiv 1 \pmod p$ is a Sylow $p$-number.
We call such integers \textit{\textbf{pseudo Sylow $p$-numbers}}.
We mention two results from~\cite{mhall}:
\begin{enumerate}
	\item If $n=1+rp$, with $1<r<\frac{p+3}{2}$, then $n$ is a pseudo
	Sylow $p$-number, unless $n$ is a prime power, or
	$r=\frac{p-3}{2}$ and $p>3$ is a Fermat prime.
	\item If $n=1+3p$ and $p\geq 7$, then $n$ is a pseudo Sylow
	$p$-number.
\end{enumerate}
To obtain these results, Marshall Hall used Brauer’s sophisticated
theory of $p$-blocks of defect $1$.
In~\cite{sambale}, B.~Sambale gives an elementary proof of the fact that
if $n=1+2p$, then $n$ is a pseudo Sylow $p$-number unless $n$ is a prime
power.

In this paper we address the question of whether, for a given odd prime $p$,
there exist infinitely many pseudo Sylow $p$-numbers.

\begin{definition}
Let $p$ be an odd prime and let $x$ be a positive real number. We define
$\beta(p,x)$ (respectively, $\beta_s(p,x)$) to be the number of positive integers $n\le x$ such that $n$
is the Sylow $p$-number of some finite (respectively, finite solvable) group.
\end{definition}

We prove the following theorem.

\begin{theorem}\label{thrm}
For every odd prime $p$, there  exist two constants $c_p$ and $c_p'$ such that
$$\beta_s(p,x)\sim  c_p\,x(\log x)^{\,\frac{1}{p-1}-1}
\qquad\text{as } x\to\infty$$
and
$$\beta_s(p,x)\le \beta(p,x)\le c_p'x(\log x)^{\,\frac{1}{p-1}-1}
\qquad\text{when }  x\ge 2.$$
\end{theorem}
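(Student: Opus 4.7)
The plan is to translate the problem into a question in analytic number theory about integers all of whose prime-power parts lie in prescribed residue classes modulo $p$, and then to apply the Selberg--Delange method. By Philip Hall's theorem quoted above, $\beta_s(p,x) = |\mathcal{S}_p \cap [1,x]|$, where $\mathcal{S}_p$ is the multiplicatively closed set of positive integers $n = \prod q_i^{e_i}$ (distinct primes $q_i$) with $q_i^{e_i} \equiv 1 \pmod p$ for every $i$; equivalently, writing $d_q$ for the multiplicative order of a prime $q \ne p$ modulo $p$, the condition is $p \nmid n$ together with $d_q \mid v_q(n)$ for every prime $q \mid n$. The associated Dirichlet series is the Euler product
\[
F(s) \;=\; \sum_{n \in \mathcal{S}_p} n^{-s} \;=\; \prod_{q \ne p} \frac{1}{1 - q^{-d_q s}}.
\]
Using orthogonality of Dirichlet characters modulo $p$, I expect to factor this as $F(s) = \zeta(s)^{1/(p-1)} H(s)$, where $H(s)$ is expressible in terms of Dirichlet $L$-functions modulo $p$ together with convergent Euler factors from the primes $q$ with $d_q \ge 2$, hence holomorphic and nonvanishing in a strip $\Re s > 1 - \delta$. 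The Selberg--Delange method then delivers
\[
\beta_s(p,x) \;\sim\; \frac{H(1)}{\Gamma\!\bigl(1/(p-1)\bigr)}\,x (\log x)^{\frac{1}{p-1} - 1},
\]
which yields the asymptotic and identifies $c_p$.

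For the upper bound on $\beta(p,x)$, I would use Marshall Hall's reduction (also stated in the introduction) to write every Sylow $p$-number as $n = mk$ with $m \in \mathcal{S}_p$ and $k$ in the monoid $\mathcal{T}_p$ generated by the Sylow $p$-numbers of non-abelian finite simple groups. This yields
\[
\beta(p,x) \;\le\; \sum_{k \in \mathcal{T}_p,\,k \le x} \beta_s(p,x/k).
\]
Combined with the uniform upper bound $\beta_s(p,y) \le C\,y\,(\log(2+y))^{\frac{1}{p-1}-1}$ that falls out of the Selberg--Delange analysis, and splitting the outer sum at $k \le \sqrt{x}$ versus $k > \sqrt{x}$ (the former controlled by monotonicity, the latter by the trivial bound $x/k \le \sqrt{x}$), the desired estimate $\beta(p,x) \le c_p' x (\log x)^{\frac{1}{p-1}-1}$ is reduced to the convergence of $\sum_{k \in \mathcal{T}_p} 1/k$. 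In turn, via the formal Euler product $\prod_S (1 - 1/s(S))^{-1}$, this amounts to $\sum_S 1/s(S) < \infty$, where $S$ ranges over the non-abelian finite simple groups with $p \mid |S|$ and $s(S)$ denotes the Sylow $p$-number of $S$.

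The main obstacle will be this last convergence, which I would verify via the classification of finite simple groups: for alternating groups, standard estimates show that $s(\Alt(n))$ grows super-polynomially in $n$; for simple groups of Lie type of a fixed Dynkin diagram, $s(G(q))$ is a polynomial in $q$ of positive degree, and a uniform lower bound of the form $s(S) \ge |S|^{\alpha}$ for an explicit $\alpha > 0$, combined with the polynomial count of simple groups of order at most $N$, gives convergence over each family; the sporadic groups contribute only a finite sum. A secondary technical point is ensuring enough analytic continuation and nonvanishing of $H(s)$ for the Selberg--Delange theorem to output an \emph{asymptotic} rather than merely a two-sided bound, which rests on the standard zero-free region for the Dirichlet $L$-functions modulo the fixed prime $p$.
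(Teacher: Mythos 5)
Your proposal follows essentially the same route as the paper.  For the solvable count $\beta_s(p,x)$, the paper invokes a classical theorem of Landau (Theorem~\ref{landauu}) for the multiplicative function detecting integers all of whose prime divisors are $\equiv 1\pmod p$, and then handles the remaining prime-power conditions $q^e\equiv 1\pmod p$ with $q\not\equiv 1\pmod p$ by an elementary Dirichlet convolution (Lemma~\ref{l:0}); you instead feed the full Dirichlet series $\prod_{q\ne p}(1-q^{-d_qs})^{-1}=\zeta(s)^{1/(p-1)}H(s)$ to the Selberg--Delange machine.  These are two packagings of the same analytic content (Landau's theorem is the prototype of a Selberg--Delange estimate), and both yield the asymptotic with the same constant.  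For $\beta(p,x)$ you use Marshall Hall's reduction and, after splitting the convolution sum, reduce to $\sum_{b\in\mathcal{T}_p}1/b<\infty$ and ultimately to $\sum_S 1/s(S)<\infty$; this is precisely the mechanism of Proposition~\ref{prop}, combined with Lemmas~\ref{l:alt} and~\ref{l:Lie}.

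There is, however, a concrete error in the step you flag as the main obstacle, the convergence over simple groups of Lie type.  You assert a \emph{uniform} lower bound $s(S)\ge |S|^{\alpha}$ for a fixed $\alpha>0$.  This is false as the Lie rank grows: for $S=\PSL_{r+1}(q)$ one has $|S|\asymp q^{r(r+2)}$ while the minimal faithful permutation degree $m(S)$, and hence $s(S)\ge m(S)$, is only $\asymp q^{r}$, so $s(S)\asymp|S|^{1/(r+2)}$ with exponent tending to $0$ as $r\to\infty$.  Consequently the proposed combination of a uniform power saving with a ``polynomial count of simple groups of order at most $N$'' does not close.  The fix (which is what Lemma~\ref{l:Lie} implements) is to argue rank by rank: for fixed $r\ge 2$ one has $s(S)\gg q^{r}$, so $\sum_{q}q^{-r}\ll 2^{-r}$ decays geometrically in $r$, there are only boundedly many types per rank, and the double sum over $(r,q)$ is finite; the rank-one case $\PSL_2(q)$ is then treated separately.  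Your treatment of the alternating and sporadic families is fine.  With the rank-by-rank correction in place, the rest of your argument goes through.
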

Unless explicitly specified, logarithms are assumed to be base $e$.
The constant $c_p$ can be explicitly computed; see Remark~\ref{remark1}.

Clearly, the number $\alpha(p,x)$ of integers $n$ less than or equal to $x$ with $n \equiv 1 \pmod p$ is  1+$\lfloor (x-1)/p\rfloor.$ Hence $$\frac{\beta(p,x)}{\alpha(p,x)} \leq \frac{c_p'}{p}(\log x)^{\,\frac{1}{p-1}-1} \qquad\text{when } x \geq 2.$$
Therefore, when $p$ is odd, not only do infinitely many pseudo Sylow
$p$-numbers exist, but asymptotically very few integers congruent to
$1 \pmod p$ arise as the number of Sylow $p$-subgroups of a finite group.

\section{Solvable groups}\label{sec:thrm}
In this section we only deal with solvable groups.
Before proceeding with the proof of Theorem~\ref{thrm} for solvable groups, we need a deep theorem of Landau~\cite[Section~183, Band~2]{landau1}. Recall that $f:\mathbb{N}\to\mathbb{R}$ is \textit{\textbf{arithmetic multiplicative}} if $f(ab)=f(a)f(b)$, for any two relatively prime natural numbers, and $f(1)=1.$
\begin{theorem}\label{landauu}
Let $p$ be a prime number and let $g_p:\mathbb{N}\to\mathbb{R}$ be the arithmetic multiplicative function defined by $g_p(q^k)=1$ when $q^k$ is a prime power and $p\mid q-1$ and $g_p(q^k)=0$ otherwise. Then $$\sum_{n\le x}g_p(n)\sim C_px(\log x)^{\frac{1}{p-1}-1},$$
as $x\to\infty$   where  $C_p$ is the constant
$$\frac{1}{\Gamma(1/(p-1))}\prod_{q\, \mathrm{ prime }}\left(1-\frac{1}{q}\right)^{\frac{1}{p-1}}\prod_{\substack{q\, \mathrm{prime}\\q\equiv 1\pmod p}}\left(1-\frac{1}{q}\right)^{-1}.$$
\end{theorem}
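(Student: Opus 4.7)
The plan is to analyse the Dirichlet series
\[
F(s)=\sum_{n\ge 1}\frac{g_p(n)}{n^s}=\prod_{q\equiv 1\pmod p}\frac{1}{1-q^{-s}},
\]
absolutely convergent for $\Re s>1$, and to extract its summatory behaviour from the singularity at $s=1$ via the Selberg--Delange method. The goal is to show that $F$ has a singularity of type $(s-1)^{-1/(p-1)}$ at $s=1$ and that, after dividing this out, the residual factor extends analytically to a neighbourhood of the closed half-plane $\{\Re s\ge 1\}$.

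Taking logarithms, I would write
\[
\log F(s)=\sum_{q\equiv 1\pmod p}q^{-s}+H_1(s),
\]
where $H_1(s)=\sum_{q\equiv 1\pmod p}\sum_{k\ge 2}q^{-ks}/k$ is holomorphic on $\Re s>1/2$. The remaining sum is handled through Dirichlet characters modulo $p$: by orthogonality,
\[
\sum_{q\equiv 1\pmod p}q^{-s}=\frac{1}{p-1}\sum_{\chi\bmod p}\sum_{q\text{ prime}}\chi(q)\,q^{-s}.
\]
The principal character contributes $\frac{1}{p-1}\log\zeta(s)$ plus a function holomorphic on $\Re s>1/2$ (coming from the removed Euler factor at $p$); each non-principal character contributes $\frac{1}{p-1}\log L(s,\chi)$ plus a holomorphic correction. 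Since every non-principal Dirichlet $L$-function modulo $p$ is entire and non-vanishing at $s=1$, exponentiating yields
\[
F(s)=\zeta(s)^{1/(p-1)}G(s),
\]
where $G$ is holomorphic and non-vanishing on an open neighbourhood of $\{\Re s\ge 1\}$.

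With this factorisation, the Selberg--Delange theorem delivers
\[
\sum_{n\le x}g_p(n)\sim\frac{G(1)}{\Gamma(1/(p-1))}\,x(\log x)^{\frac{1}{p-1}-1},
\]
and multiplying the Euler products gives
\[
G(1)=\prod_{q\text{ prime}}\left(1-\tfrac{1}{q}\right)^{\frac{1}{p-1}}\prod_{q\equiv 1\pmod p}\left(1-\tfrac{1}{q}\right)^{-1},
\]
where the combined product is read in the convergent arrangement obtained by pairing the factors of $\zeta(s)^{-1/(p-1)}=\prod_q(1-q^{-s})^{1/(p-1)}$ with those of $F(s)$: the divergent part of $\sum_q 1/q$ cancels against its counterpart for primes $q\equiv 1\pmod p$ with precisely the right coefficient $1/(p-1)$, thanks to Mertens-type estimates in arithmetic progressions. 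Substituting recovers the constant $C_p$ in the statement. The main technical hurdle is the analytic-continuation step establishing the factorisation of $F$; this rests on Dirichlet's classical theorems that $L(s,\chi)$ continues analytically across $\Re s=1$ and that $L(1,\chi)\ne 0$ for non-principal $\chi$. Once those facts are available, the remainder is routine Perron-contour and Tauberian bookkeeping.
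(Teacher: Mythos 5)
The paper does not prove Theorem~\ref{landauu} at all: it is quoted verbatim as ``a deep theorem of Landau'' with a citation to the \emph{Handbuch}, and is then used as a black box. So there is no internal proof for your argument to be compared against; what you have written is an independent proof.

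That said, your Selberg--Delange outline is correct and is the standard modern route to exactly this statement. The Euler product $F(s)=\prod_{q\equiv 1\ (p)}(1-q^{-s})^{-1}$, the character-orthogonality decomposition of $\log F$, the identification $F(s)=\zeta(s)^{1/(p-1)}G(s)$ with $G$ well-behaved near $s=1$ via non-vanishing of $L(1,\chi)$, and the resulting main term $\frac{G(1)}{\Gamma(1/(p-1))}x(\log x)^{1/(p-1)-1}$ all match the statement, including the conditionally convergent product expression for $G(1)=C_p\Gamma(1/(p-1))$, whose well-definedness you correctly attribute to Mertens in progressions (the $\log\log$ divergences cancel because $q\equiv 1\pmod p$ has Dirichlet density $1/(p-1)$). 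Two small cautions, neither a gap in substance: (i) for Selberg--Delange one needs $G$ to be holomorphic with polynomial growth in a region of the shape $\sigma>1-c/\log(|t|+2)$, which is not literally an ``open neighbourhood of the closed half-plane''; this is supplied by the classical zero-free regions for $\zeta$ and the $L(s,\chi)$, which you should invoke explicitly rather than only $L(1,\chi)\ne 0$; (ii) Landau's own 1909 argument predates Selberg--Delange but is a direct contour-integral ancestor of it, so citing Landau (as the paper does) or giving your proof are, up to packaging, the same analytic content. If your aim were a self-contained paper, your write-up would be a reasonable appendix; within this paper, citing the literature as the authors do is the shorter and equally rigorous option.
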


Let $f,g:\mathbb{N}\to \mathbb{R}$ be functions.
The \textit{\textbf{convolution}} $f\ast g$ is defined by
\[
(f\ast g)(n)=\sum_{d\mid n} f(d)g(n/d).
\]

Let $p$ be a prime number.
Let $f_p:\mathbb{N}\to\mathbb{R}$ be the arithmetic multiplicative
function defined on prime powers $q^k$ by
\begin{equation}\label{eq:1}
f_p(q^k)=
\begin{cases}
1 & \text{if } p\mid q^k-1,\\
0 & \text{otherwise}.
\end{cases}
\end{equation}

Given a prime $q\ne p$, let ${\bf o}_p(q)$ be the smallest positive
integer $\ell$ such that $p\mid q^\ell-1$.
Observe that ${\bf o}_p(q)$ is the order of the congruence class
$q\pmod p$ in the multiplicative group of the field
$\mathbb{Z}/p\mathbb{Z}$.

Similarly, let $h_p:\mathbb{N}\to\mathbb{R}$ be the arithmetic
multiplicative function defined on prime powers $q^k$ by
\[
h_p(q^k)=
\begin{cases}
1 & \text{if } {\bf o}_p(q)\mid k \text{ and } {\bf o}_p(q)>1,\\
0 & \text{otherwise}.
\end{cases}
\]

A positive integer $n$ is \textit{\textbf{squareful}} if every prime divisor of $n$
occurs with exponent at least $2$ in its prime factorization. Observe that the support of $h_p$ is on squareful numbers.

\begin{lemma}\label{l:0}
We have $f_p=h_p\ast g_p$.
\end{lemma}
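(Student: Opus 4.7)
The plan is to verify the identity $f_p=h_p\ast g_p$ by checking it on prime powers and invoking multiplicativity. Since $f_p$, $g_p$, $h_p$ are all arithmetic multiplicative by definition, and since the Dirichlet convolution of two multiplicative functions is again multiplicative, both sides are arithmetic multiplicative functions of $n$. Hence it suffices to prove $f_p(q^k)=(h_p\ast g_p)(q^k)$ for every prime $q$ and every integer $k\ge 1$, which I would carry out by splitting into a few elementary cases.

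First, one expands
\[
(h_p\ast g_p)(q^k)=\sum_{j=0}^{k}h_p(q^{j})\,g_p(q^{k-j}).
\]
If $q=p$, then $p\nmid q^{k}-1$ so $f_p(q^k)=0$; on the other hand $g_p(p^{m})=0$ for every $m\ge 1$ because $p\nmid p-1$, and $h_p(p^{m})=0$ for every $m\ge 1$ (since ${\bf o}_p(p)$ is not defined, equivalently the condition ``${\bf o}_p(q)>1$'' fails), so the only potentially nonzero summand is $j=k$, $m=0$, which is $h_p(p^k)g_p(1)=0$. Both sides are $0$.

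Now suppose $q\ne p$ and set $\ell={\bf o}_p(q)$. The crucial observation is that $g_p(q^{m})$ vanishes for every $m\ge 1$ as soon as $\ell>1$, while $g_p(q^{m})=1$ for every $m\ge 0$ when $\ell=1$; in particular $g_p(1)=1$ always. Thus if $\ell>1$ the convolution collapses to its $j=k$ term, giving $(h_p\ast g_p)(q^k)=h_p(q^k)$, which equals $1$ precisely when $\ell\mid k$, matching the condition $p\mid q^k-1$ that characterises $f_p(q^k)=1$. If $\ell=1$ then $h_p(q^{j})=0$ for every $j\ge 1$, so only the $j=0$ term survives and $(h_p\ast g_p)(q^k)=g_p(q^k)=1=f_p(q^k)$, again in agreement.

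There is no real obstacle here: the identity is a routine prime-power verification once multiplicativity is in place. The only subtlety worth writing down carefully is how $h_p$ and $g_p$ behave at $q=p$ and at primes $q$ with $p\mid q-1$, which is handled by the case analysis above; in particular the condition ``${\bf o}_p(q)>1$'' in the definition of $h_p$ is what prevents double counting when $\ell=1$ and makes the convolution identity come out exactly right.
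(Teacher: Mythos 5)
Your proof is correct and takes essentially the same approach as the paper: reduce to prime powers via multiplicativity of $f_p$, $g_p$, $h_p$, expand the convolution, and split into cases according to whether $p\mid q-1$. The only cosmetic difference is that you single out $q=p$ as its own case, whereas the paper folds it into the ``$p$ relatively prime to $q-1$'' case (using the implicit convention that $h_p(p^k)=0$ since ${\bf o}_p(p)$ is undefined); your version is marginally more explicit on this point but the substance is identical.
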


\begin{proof}
Since $f_p$, $g_p$, and $h_p$ are arithmetic multiplicative functions,
it suffices to show that
\[
f_p(q^k)=(h_p\ast g_p)(q^k)
\]
for every prime $q$ and every positive integer $k$.
We have
\[
(h_p\ast g_p)(q^k)=\sum_{i=0}^k h_p(q^i)g_p(q^{k-i}).
\]

If $p$ is relatively prime to $q-1$, then by definition of $g_p$ we have
\[
(h_p\ast g_p)(q^k)=h_p(q^k)g_p(1)=h_p(q^k),
\]
and $h_p(q^k)=f_p(q^k)$ by the definitions of $f_p$ and $h_p$.

Similarly, if $p$ divides $q-1$, then ${\bf o}_p(q)=1$ and hence
\[
(h_p\ast g_p)(q^k)=\sum_{i=0}^k h_p(q^i)=h_p(1)=1=f_p(q^k).\qedhere
\]
\end{proof}

\begin{theorem}\label{thrms}
For every odd prime $p$, there exists a constant $c_p>0$ such that
\[
\beta_s(p,x)\sim c_p\,x(\log x)^{\,\frac{1}{p-1}-1}
\qquad\text{as } x\to\infty.
\]
\end{theorem}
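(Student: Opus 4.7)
The plan is to express $\beta_s(p,x)$ as the summatory function of $f_p$, decompose $f_p = h_p \ast g_p$ via Lemma~\ref{l:0}, and then feed Landau's estimate (Theorem~\ref{landauu}) for the partial sums of $g_p$ into each term, exploiting the fact that $h_p$ is supported on the sparse set of squareful integers. Throughout, set $\psi(y) := y(\log y)^{1/(p-1)-1}$.

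\emph{Step 1: Reduction.} By P.~Hall's necessary condition, every $n$ that is a Sylow $p$-number of a solvable group has all its prime-power factors $\equiv 1\pmod p$, that is, $f_p(n)=1$. The converse is the direct-product construction recalled in the Introduction: given $n = p_1^{a_1}\cdots p_t^{a_t}$ with $p_i^{a_i}\equiv 1\pmod p$, the group $\AGL(1,p_1^{a_1})\times\cdots\times\AGL(1,p_t^{a_t})$ is solvable and has exactly $n$ Sylow $p$-subgroups. Since $f_p$ is $\{0,1\}$-valued, this gives $\beta_s(p,x) = \sum_{n\le x} f_p(n)$.

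\emph{Step 2: Convolution and the candidate constant.} Applying Lemma~\ref{l:0} and interchanging summation,
\[
\beta_s(p,x) \;=\; \sum_{d\le x} h_p(d)\, G(x/d), \qquad G(y) := \sum_{m\le y} g_p(m).
\]
There are $O(\sqrt N)$ squareful integers up to $N$, so partial summation shows that $D := \sum_{d\ge 1} h_p(d)/d$ converges absolutely. Together with the Landau asymptotic $G(y) \sim C_p\psi(y)$, this singles out the candidate constant $c_p := C_p D$.

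\emph{Step 3: Asymptotic; the main obstacle is the tail.} Fix $T$ and split $\beta_s(p,x) = A_1(x,T) + A_2(x,T)$ at $d = T$. For the head, each fixed $d$ satisfies $\psi(x/d)/\psi(x)\to 1/d$ and $G(x/d)/\psi(x/d)\to C_p$, so $A_1(x,T)/\psi(x) \to C_p\sum_{d\le T} h_p(d)/d$ as $x\to\infty$; using $A_2\ge 0$ and then letting $T\to\infty$ yields $\liminf_{x\to\infty} \beta_s(p,x)/\psi(x) \ge c_p$. The delicate part is the matching upper bound: the naive estimate $G(y)\le y$ only gives $A_2 = O(x)$, which swamps $\psi(x)$. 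To circumvent this I would split once more at $d=\sqrt x$. For $T < d\le \sqrt x$, combine the consequence $G(y)\le M\psi(y)$ (valid for $y\ge y_0$) of Landau's theorem with the inequality $\psi(x/d)\le 2^{\,1-1/(p-1)}\psi(x)/d$, which holds because $\log(x/d)\ge (\log x)/2$ and the exponent $1/(p-1)-1$ is negative; this bounds the contribution by a constant times $\psi(x)\sum_{d>T}h_p(d)/d$, which is $<\epsilon\,\psi(x)$ once $T$ is large. For $\sqrt x < d \le x$, use $G(y)\le y$ and the $O(\sqrt N)$ bound on squareful integers (via partial summation) to get a total of $O(x^{3/4})=o(\psi(x))$. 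Combining, $\limsup_{x\to\infty}\beta_s(p,x)/\psi(x)\le c_p+\epsilon$ for every $\epsilon>0$, which together with the matching $\liminf$ finishes the proof.
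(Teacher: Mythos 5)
Your argument is correct, and it follows the same backbone as the paper's proof: reduce $\beta_s(p,x)$ to $\sum_{n\le x}f_p(n)$ via P.~Hall together with the $\AGL$ construction, factor $f_p = h_p\ast g_p$, and feed Landau's asymptotic for $G(y)=\sum_{m\le y}g_p(m)$ into each term, exploiting that $h_p$ lives on squareful integers. Where you genuinely diverge is in the final limit computation. The paper factors out $C_p\,x(\log x)^{1/(p-1)-1}$ up front, rewrites the remaining sum as $\sum_n \frac{h_p(n)}{n}(1-\log n/\log x)^{\alpha}c(x/n)$, Taylor-expands $(1-u)^{\alpha}$ with an explicit Lagrange remainder $R_2$, controls the resulting series $\sum_n h_p(n)(\log n)^j/n$ for $j=1,2$, and splits the range at $n=x^{1-\varepsilon}$; this also yields the rate $O(1/\log x)$ on the main term. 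You instead keep the sum in the raw form $\sum_d h_p(d)G(x/d)$, fix a cutoff $T$, pass to the limit term by term in the head to extract $C_p\sum_{d\le T}h_p(d)/d$, and then control the tail via a second split at $\sqrt x$: for $T<d\le\sqrt x$ use $G(y)\le M\psi(y)$ together with the elementary monotonicity bound $\psi(x/d)\le 2^{1-1/(p-1)}\psi(x)/d$ (valid because $\alpha<0$), and for $d>\sqrt x$ use $G(y)\le y$ and the Erd\H{o}s--Szekeres count of squareful integers to obtain $O(x^{3/4})$. Both routes produce $S(x)\to\sum_d h_p(d)/d$, but yours avoids the Taylor machinery and the bookkeeping around the uniformity factor $c(x/n)$, at the price of forgoing any error rate. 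You also skip the paper's $p=3$ warm-up via the form $x^2+3y^2$ and Bernays' theorem, which is purely expository. One tiny omission: the theorem asserts $c_p>0$; this is immediate since $C_p>0$ and $D=\sum_d h_p(d)/d\ge h_p(1)=1$, but it is worth a line.
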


\begin{proof}
By the main result in~\cite{phall}, $\beta_s(p,x)$ is the cardinality of the
set $B(p,x)$ of the positive integers $m\le x$ such that
\[
m=p_1^{\alpha_1}\cdots p_t^{\alpha_t},
\]
where $p_1,\ldots,p_t$ are distinct primes and
$p_i^{\alpha_i}\equiv 1 \pmod p$ for each $i$.

We first give the argument when $p=3$, since in this case the argument is
much cleaner and only requires more elementary number theoretic results, and then we return to the general case.  Let $B'(3,n)$ be the set of positive integers $m\le n$ such that
$m=x^2+3y^2$ for some $x,y\in\mathbb{Z}$.
First we observe that $B'(3,n)$ is closed under multiplication, that is,
if $m,m'\in B'(3,n)$ and $mm'\le n$, then $mm'\in B'(3,n)$.
Indeed, by hypothesis,
$m=x^2+3y^2$ and $m'=x'^2+3y'^2$ for some $x,y,x',y'\in\mathbb{Z}$, and
\[
mm'=(xx'-3yy')^2+3(xy'+x'y)^2.
\]
Therefore, $mm'\in B'(3,n)$.

Let $m\in B'(3,n)$ and let
$m=p_1^{\alpha_1}\cdots p_t^{\alpha_t}$ be the prime power factorization
of $m$.  
We prove by induction on $m$ that  $m\in B(3,n).$ First we claim that if $p_i\equiv 2 \pmod 3$, then $p_i$ divides both $x$ and $y$. In fact, assume that $\gcd(x,p_i)=1$.
Since $m = x^2+3y^2\equiv 0 \pmod{p_i}$,
$-3$ is a square in $\mathbb{Z}/p_i\mathbb{Z}$.
From the law of quadratic reciprocity, we would deduce
\begin{align*}
	1
	&=\left(\frac{-3}{p_i}\right)
	= (-1)^{\frac{p_i-1}{2}}\left(\frac{3}{p_i}\right) \\
	&= (-1)^{\frac{p_i-1}{2}+\frac{(p_i-1)(3-1)}{4}}
	\left(\frac{p_i}{3}\right)
	= \left(\frac{p_i}{3}\right)=\left(\frac{2}{3}\right)=-1.
\end{align*}
This contradiction proves our claim. Hence, if $p_i\equiv 2\pmod 3$, then $x$ and $y$ are both
divisible by $p_i$, and therefore $(x/p_i)^2+3(y/p_i)^2=m/p_i^2 \in B^\prime(3,n).$ Thus $m/p_i^2\in B(3,n),$ and consequently $m\in B(3,n).$

Conversely, let $m\in B(3,n)$ and, as usual, let
$m=p_1^{\alpha_1}\cdots p_t^{\alpha_t}$ be the prime factorization of
$m$.
If $\alpha_i$ is even, then clearly $p_i^{\alpha_i}\in B'(3,n)$.
When $\alpha_i$ is odd, we have $p_i\equiv 1\pmod 3$, because
$p_i^{\alpha_i}\equiv 1\pmod p$.
Therefore, by Fermat's theorem~\cite[page~1]{Cox},
$p_i\in B'(3,n)$.
Hence $m\in B'(3,n)$.

We have shown that $B(3,n)=B'(3,n)$.
Generalizing a famous result of Landau~\cite{landau}, Bernays~\cite{bernays}
proved that
\[
|B(3,n)|=|B'(3,n)|\sim \kappa\frac{n}{\sqrt{\log n}}=n(\log n)^{\frac{1}{3-1}-1},
\]
for some absolute constant $\kappa$.

\medskip

We now assume that $p$ is an arbitrary odd prime. Observe that we may write
\begin{align*}
\beta_s(p,x)&=\sum_{n\le x}f_p(n),
\end{align*}
where $f_p$ is defined in~\eqref{eq:1}.

Thus, from Lemma~\ref{l:0}, we have
$$\sum_{n\le x}f_p(n)=\sum_{n\le x}(h_p\ast g_p)(n)=\sum_{n\le x}h_p(n)\sum_{m\le x/n}g_p(m).$$
Using Theorem~\ref{landauu}, we may replace the inner sum with $$C_p\frac{x}{n}\left(\log \left(\frac{x}{n}\right)\right)^{1/(p-1)-1}(1+o(1)).$$ Thus we obtain
\begin{align}\label{eq:11}
\sum_{n\le x}f_p(n)&=C_px(\log x)^{\frac{1}{p-1}-1}\sum_{n\le x}\frac{h_p(n)}{n}\left(1-\frac{\log n}{\log x}\right)^{\frac{1}{p-1}-1}c(x/n),
\end{align}
where $c(x/n)$ goes to $1$ as $x/n\to\infty$. We set some notation. Let
\begin{align*}
S(x)&=\sum_{n\le x}\frac{h_p(n)}{n}\left(1-\frac{\log n}{\log x}\right)^{\frac{1}{p-1}-1} c(x/n),\\
\alpha&=\frac{1}{p-1}-1,\\
u_n&=\frac{\log n}{\log x}\in [0,1],\\
w(u)&=(1-u)^{\alpha}.
\end{align*}
With this notation, we have $$S(x)=\sum_{n\le x}h_p(n)w(u_n) c(x/n).$$

Let $\varepsilon\in (0,1)$. On the interval $[0,1-\varepsilon]$, the function $w(u)\in C^\infty$ and by Newton's binomial theorem, we have
\begin{equation}\label{stimaw}
w(u)=(1-u)^{\alpha}=\sum_{m=0}^\infty{\alpha\choose m}(-u)^m=1-\alpha u+R_2(u),
\end{equation}
where the remainder $R_2(u)$ can be written in the form
$$R_2(u)=\frac{w^{(2)}(\zeta)}{2},$$
for a suitable $\zeta \in (0,u)$. We have
\begin{align*}
w^{(2)}(t)&=\alpha(\alpha-1)(1-t)^{\alpha-2}.
\end{align*}
This shows that for $t\in [0,1-\varepsilon]$, we have
\begin{align*}
\alpha(\alpha-1)\leq w^{(2)}(t)\le \alpha(\alpha-1)\varepsilon^{\alpha-2}.
\end{align*}
Hence, for all $u\in [0,1-\varepsilon]$, we have
\begin{equation*}
0\leq R_2(u)\le C_\varepsilon u^2,
\end{equation*}
where $C_\varepsilon$ is $\frac{\alpha(\alpha-1)}{2}\varepsilon^{\alpha-2}$.

Now, write
\begin{align*}
S_{main}(x)&=\sum_{n\le x^{1-\varepsilon}}\frac{h_p(n)}{n}w(u_n)c(x/n),&&S_{tail}(x)&=\sum_{x^{1-\varepsilon}<n\le x}\frac{h_p(n)}{n}w(u_n)c(x/n).
\end{align*}
For $n\le x^{1-\varepsilon}$, we have $u_n=\frac{\log n}{\log x}\le 1-\varepsilon$ and hence we may use (\ref{stimaw}). We deduce
\begin{align*}
S_{main}=&\sum_{n\le x^{1-\varepsilon}}\frac{h_p(n)c(x/n)}{n}
-\alpha \sum_{n\le x^{1-\varepsilon}}\frac{h_p(n)u_nc(x/n)}{n}\\
&+
\sum_{n\le x^{1-\varepsilon}}\frac{h_p(n)R_2(u_n)c(x/n)}{n}.
\end{align*}
We have
\begin{align*}
\sum_{n\le x^{1-\varepsilon}}\frac{h_p(n)u_nc(x/n)}{n}&=\frac{1}{\log x}\sum_{n\le x^{1-\varepsilon}}\frac{h_p(n)\log n c(x/n)}{n}.
\end{align*}
Analogously, for the remainder $R_2(u_n)$, we have
\begin{align*}
\sum_{n\le x^{1-\varepsilon}}\frac{h_p(n)R_2(u_n)c(x/n)}{n}&\le
C_\varepsilon\sum_{n\le x^{1-\varepsilon}}\frac{h_p(n)u_n^2c(x/n)}{n}\\
&=\frac{C_\varepsilon}{(\log x)^2}
\sum_{n\le x^{1-\varepsilon}}\frac{h_p(n)(\log n)^2c(x/n)}{n}.
\end{align*}
Now, observe that the series $\sum_{n}h_p(n)(\log n)^2/n$ converges because the support of $h_p(n)$ is on squareful integers. Therefore, the argument above shows that
\begin{align*}
S_{main}(x)&=\sum_{n\le x^{1-\varepsilon }}\frac{h_p(n)c(x/n)}{n}+O_\varepsilon\left(\frac{1}{\log x}\right).
\end{align*}

We now deal with $S_{tail}(x)$. We have $1-u_n=1-\frac{\log n}{\log x}\ge 0$ and $w(u_n)=(1-u_n)^\alpha\le \varepsilon^\alpha$. Thus
\begin{equation*}
S_{tail}(x)\le \varepsilon^{\alpha}\sum_{x^{1-\varepsilon}<n\le x}\frac{h_p(n)c(x/n)}{n}\le \varepsilon^\alpha \kappa \sum_{\substack{n>x^{1-\varepsilon }\\ n \textrm{ squareful}}}\frac{1}{n},
\end{equation*}
where $\kappa$ is the maximum of the function $c(x/n)$. From a theorem of Erd\H{o}s and Szekeres~\cite{es}, we have
$$|\{n\le y\mid n\textrm{ squareful }\}|\le C'y^{1/2}.$$
From this, it follows that
$$\sum_{\substack{n>y\\ n\textrm{ squareful}}}\frac{1}{n}\le C''y^{-1/2},$$
for some positive constant $C''$. Applying this with $y=x^{1-\varepsilon}$, we deduce
\begin{equation*}
S_{tail}(x)\le \varepsilon^{\alpha}\kappa C''x^{-(1-\varepsilon)/2}.
\end{equation*}
In other terms,
$$S_{tail}(x)\in O_\varepsilon(x^{-(1-\varepsilon)/2}).$$

Putting everything together, we get
$$S(x)=\sum_{n\le x}\frac{h_p(n)c(x/n)}{n}+O\left(\frac{1}{\log x}\right).$$

We now deal with $c(x/n)$. Recall that $\kappa$ is the maximum of the function $c(y)$. Let $t=\sum_{n}h_p(n)/n$. For every $\varepsilon>0$, let $\delta_\varepsilon'>0$ be such that $|c(x/n)-1|\le \varepsilon/(2t) $ when $x/n\ge \delta_\varepsilon'$ and let $\delta_\varepsilon''$ such that $$\sum_{\substack{n\, \mathrm{ squareful}\\ n> \delta_\varepsilon''}}\frac{h_p(n)}{n}\le \frac{\varepsilon}{2\kappa}.$$
Let $\delta_\varepsilon=\delta_\varepsilon'\delta_\varepsilon''$. When $x\ge \delta_\varepsilon$, 
we have
\begin{align*}
\left|\sum_{n\le x}\frac{h_p(n)c(x/n)}{n}-\sum_{n\le x}\frac{h_p(n)}{n}\right|&\le\sum_{\delta_\varepsilon'' < n}
\frac{h_p(n)\kappa }{n}+
\sum_{n\le \min\{x,\delta_\varepsilon''\}}
\frac{h_p(n)(c(x/n)-1) }{n}\\
&=\frac{\varepsilon}{2}+\sum_{x/n\ge \delta_\varepsilon'}
\frac{h_p(n)(c(x/n)-1) }{n}\le\varepsilon.
\end{align*}
We deduce
\begin{align*}
\lim_{x\to\infty}S(x)&=\sum_{n\ge 1}\frac{h_p(n)}{n}.
\end{align*}
Now the proof follows from~\eqref{eq:11}.
\end{proof}
\begin{remark}\label{remark1}{\rm
The proof of Theorem~\ref{thrms} shows that $$c_p=C_p\cdot\sum_{n}\frac{h_p(n)}{n},$$
where $C_p$ is the constant in Theorem~\ref{landauu}. Therefore, in principle $c_p$ can be calculated because we have the Euler product
$$\sum_{n}\frac{h_p(n)}{n}=\prod_{q\,\mathrm{prime}}\sum_{i=0}^\infty \frac{h_p(q^i)}{q^i}.$$
Thus 
\begin{align*}
\frac{c_p}{C_p}=\prod_{\substack{k\mid p-1\\ k\ne 1}}\prod_{q \textrm{ prime}}\left(1+q^{-{\bf o}_p(q)}+q^{-2{\bf o}_p(q)}+\cdots\right)=\prod_{\substack{k\mid p-1\\k\ne 1}}\prod_{q \textrm{ prime}}\left(1-q^{-{\bf o}_p(q)}\right)^{-1}.
\end{align*}
}
\end{remark}
\section{Sylow numbers arising from alternating groups}\label{sec:alt}

Let $n$ be a positive integer and let $p$ be an odd prime number.
Let $n=a_0+a_1p+\cdots+a_\ell p^\ell$ be the $p$-adic expansion of $n$, that is, $a_i\in \mathbb{N}$, $0\le a_i\le p-1$ and $a_\ell\ne 0$. From the structure of a Sylow $p$-subgroup $P$ of $\mathrm{Sym}(n)$, we see that the normalizer ${\bf N}_{\mathrm{Sym}(n)}(P)$ of $P$ in $\mathrm{Sym}(n)$ has order
$$\prod_{i=0}^\ell a_i!\left((p-1)^ip^{\frac{p^i-1}{p-1}}\right)^{a_i}.$$
Moreover $P\leq \mathrm{Alt}(n)$ and $|\mathrm{Alt}(n):{\bf N}_{\mathrm{Alt}(n)}(P)|=
		|\mathrm{Sym}(n):{\bf N}_{\mathrm{Sym}(n)}(P)|$.
As $a_i!\le (p-1)^{a_i}$, we deduce
 \begin{align*}
 |{\bf N}_{\mathrm{Sym}(n)}(P)|&\le (p-1)^{\sum_{i=0}^\ell a_i} (p-1)^{\sum_{i=0}^\ell ia_i}p^{\frac{\sum_{i}a_i(p^i-1)}{p-1}}\\
 &< p^{\sum_i(a_i+ia_i+a_i\frac{p^i-1}{p-1})}.
 \end{align*}
Since $p\ge 3$, it is straightforward to verify that
$$1+i+\frac{p^i-1}{p-1}\le p^i,$$
for every $i$. Thus 
\begin{align*}
 |{\bf N}_{\mathrm{Sym}(n)}(P)|&\le p^{\sum_ia_ip^i}=p^n.
 \end{align*}
We get
\begin{equation}\label{eq:indexnor}
|\mathrm{Sym}(n):{\bf N}_{\mathrm{Sym}(n)}(P)|\ge \frac{n!}{p^n}\ge\frac{(n/e)^n}{p^{n}}\ge\left(\frac{n}{ep}\right)^n.
\end{equation}

%Let $$\mathcal{A}=\{|\mathrm{Sym}(n):{\bf N}_{\mathrm{Sym}(n)}(P)|\mid n\in\mathbb{N}, P\,\,\mathrm{Sylow}\, p\textrm{-subgroup of }\mathrm{Sym}(n)\}$$and let $a(x)=|\{a\in\mathcal{A}\mid a\le x\}|$. From above, if $|\mathrm{Sym}(n):{\bf N}_{\mathrm{Sym}(n)}(P)|\le x$, then $n\log(n/ep)\le \log x$. 

\begin{lemma}\label{l:alt}\label{indexnor}
	Let $p$ be an odd prime, let $x$ be a real number and let $n$ be a positive integer with $n\le x$. If $|\mathrm{Sym}(n):{\bf N}_{\mathrm{Sym}(n)}(P)|\le x$, then $n\le e\log x$.
\end{lemma}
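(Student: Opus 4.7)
The plan is to feed the lower bound on the index of the Sylow normalizer just established in inequality~\eqref{eq:indexnor} directly into the hypothesis. From (\ref{eq:indexnor}) and $|\mathrm{Sym}(n):\mathbf{N}_{\mathrm{Sym}(n)}(P)|\le x$ one immediately obtains
\[
\Bigl(\frac{n}{ep}\Bigr)^{\!n}\le x,
\]
and taking natural logarithms and dividing by $n$ yields the much more tractable inequality $\log(n/(ep))\le(\log x)/n$.

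The main argument is then by contradiction. Suppose $n>e\log x$; then $(\log x)/n<1/e$, and the displayed inequality forces $\log(n/(ep))<1/e$, i.e.\ $n<e^{1+1/e}p$. Combined with $n>e\log x$ this produces a two-sided squeeze $e\log x<n<e^{1+1/e}p$, which in the asymptotic regime (fixed odd prime $p$ and $x\to\infty$) in which the lemma will be applied is a contradiction, since the right-hand side is essentially constant in $x$ while the left grows without bound. The auxiliary hypothesis $n\le x$ is brought to bear through the chain $p\le n\le x$ available whenever $P$ is nontrivial, tying the prime $p$ to $x$ and making the squeeze uniform.

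The step I expect to be most delicate is precisely this final squeeze. The two inequalities $n>e\log x$ and $n<e^{1+1/e}p$ are inconsistent only under the comparison $e^{1+1/e}p\le e\log x$, i.e.\ $p\le e^{-1/e}\log x$. Outside of this regime one either invokes the sharper lower bound $|\mathrm{Sym}(n):\mathbf{N}_{\mathrm{Sym}(n)}(P)|\ge n!/p^n$ together with a stronger form of Stirling's approximation, or handles the bounded list of exceptional small pairs $(n,p,x)$ by direct inspection; once this bookkeeping is in place the conclusion $n\le e\log x$ follows.
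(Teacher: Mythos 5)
Your first step is fine and is essentially the paper's first case: from \eqref{eq:indexnor} and the hypothesis you get $n\log(n/(ep))\le\log x$, which settles the lemma whenever $n\ge e^{1+1/e}p$ (the paper uses the threshold $e^{2}p$, same idea). But the complementary regime $e\log x<n<e^{1+1/e}p$ is not a fringe to be swept up by ``bookkeeping''; it is the entire substance of the lemma, and your plan for it does not work. First, the suggested remedy of using $|\mathrm{Sym}(n):\mathbf{N}_{\mathrm{Sym}(n)}(P)|\ge n!/p^{n}$ with a sharper Stirling bound gains nothing: the loss is not in Stirling but in the crude estimate $|\mathbf{N}_{\mathrm{Sym}(n)}(P)|\le p^{n}$, which is enormously wasteful when $n$ is of order $p$. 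For example, when $n=p$ one has $n!/p^{n}\approx\sqrt{2\pi p}\,e^{-p}<1$, so this bound is vacuous, whereas the true index is $(p-2)!$. Second, the exceptional set is not ``a bounded list of pairs'': it contains, for every prime $p$, all $n$ up to roughly $4p$, so no finite inspection can dispose of it. Third, the chain $p\le n\le x$ is far too weak to restore the squeeze, since the contradiction would require $p\le e^{-1/e}\log x$, and $p\le x$ says nothing of the sort.

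What is actually needed in this regime, and what the paper does, is a genuinely better lower bound on the index coming from the exact structure of the Sylow normalizer: for $p>7$ one has $n<p^{2}$, so $n=a_{0}+a_{1}p$ and $|\mathbf{N}_{\mathrm{Sym}(n)}(P)|=a_{0}!\,a_{1}!\,((p-1)p)^{a_{1}}$, which yields $|\mathrm{Sym}(n):\mathbf{N}_{\mathrm{Sym}(n)}(P)|\ge p^{a_{1}(p-2)}$; the hypothesis then gives $a_{1}(p-2)\log p\le\log x$, and an explicit elementary inequality shows $a_{1}(p-2)\log p\ge n$ for all $p>7$, with the primes $p\in\{3,5,7\}$ (and the correspondingly small $n$) checked directly. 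Without some substitute for this refined normalizer estimate, your argument establishes the lemma only when $n\ge e^{1+1/e}p$, so the proof as proposed has a genuine gap.
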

\begin{proof}
 Assume first $n\ge e^2p$. Then $\log (n/ep)\ge \log e=1$ and hence 
 \begin{equation*}e\log x > \log x\ge n\log(n/ep)\ge n.
 \end{equation*}
 
Assume next $n<e^2p$. It can be checked directly that $n\le e\log x$,  when $p=3$ and $n\le 22$, or $p=5$ and $n\le 36$, or $p=7$ and $n\le 51$. So we may assume $p>7$ and consequently $n <p^2.$
Thus $n=a_0+a_1p$. In this case, we may refine the inequality~\eqref{eq:indexnor}. Indeed,
\begin{align*}
|\mathrm{Sym}(n):{\bf N}_{\mathrm{Sym}(n)}(P)|&=\frac{(a_0+a_1p)!}{a_0!a_1!((p-1)p)^{a_1}}\ge \frac{(a_1p)^{a_1p}}{a_1!p^{2a_1}}\ge p^{a_1(p-2)}.
\end{align*}
If $|\mathrm{Sym}(n):{\bf N}_{\mathrm{Sym}(n)}(P)|\le x$, then $a_1(p-2)\log p\le \log x$. We have
\begin{align*}
a_1(p-2)\log p&=a_0+a_1p+a_1p\left(\left(1-\frac{2}{p}\right)\log p-1\right)-a_0\\
&=n+a_1p\left(\left(1-\frac{2}{p}\right)\log p-1\right)-a_0.
\end{align*}
In particular, $n\le \log x$, provided $a_1p((1-2/p)\log p-1)-a_0\ge 0.$  On the other hand, if 
$a_1p\left(\left(1-\frac{2}{p}\right)\log p-1\right)-a_0<0,$
then $p((1-2/p)\log p-1)<a_0/a_1\le p-1$, that is, $(1-2/p)\log p\le 2-1/p$. It is easy to verify that this inequality is satisfied only when $p\le 7$.
\end{proof}

\section{Sylow numbers arising from simple groups of Lie type}\label{sec:Lie}
The scope of this section is to prove an analogue of Lemma~\ref{l:alt} for simple groups of Lie type.

For this section and the next, if $A\subseteq \mathbb{N}$ and $x>0$, we set
$$A(x)=|\{a\in A: a\le x\}|.$$

\begin{lemma}\label{l:Lie}
Let $p$ be an odd prime and let $A_{Lie}$ be the set consisting of the positive natural numbers $n$ such that there exists a non-abelian simple group of Lie type  having $n$ Sylow $p$-subgroups. There exists a positive constant $c_{Lie}$ such that $A_{Lie}(x)\le c_{Lie}x^{1/2}/\log x$ for every $x\ge 2$.
\end{lemma}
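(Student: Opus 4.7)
The plan is to establish a quadratic lower bound $n_p(G)\gtrsim q^2$ for the number of Sylow $p$-subgroups of a simple Lie type group $G$ over $\mathbb{F}_q$ in the dominant case, and then to invoke the prime-number theorem to bound the admissible prime powers. Concretely, once we know that $n_p(G)\le x$ forces $q\le C x^{1/2}$, the number of prime powers in that range is $O(x^{1/2}/\log x)$, which is precisely the target bound.

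I would first separate the simple Lie type groups according to whether $p$ is the defining characteristic. If $p\mid q$, then $q=p^a$, so there are at most $\lfloor\log x/\log p\rfloor+1$ admissible values of $q\le x$; for each such $q$, only finitely many types of simple Lie type groups of rank $r$ can satisfy $n_p(G)\le x$, since $n_p(G)$ grows with both $q$ and $r$. This case therefore contributes $O((\log x)^2)$ to $A_{Lie}(x)$, which is negligible compared with $x^{1/2}/\log x$.

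In the cross-characteristic case $p\nmid q$, the key observation is that a Sylow $p$-subgroup $P$ of $G$ lies inside the normalizer of a suitable maximal torus $T$ of $G$ (a $\Phi_d$-torus, with $d={\bf o}_p(q)$). Hence $\nor{G}{P}\le \nor{G}{T}$, and $|\nor{G}{T}|\le |T|\cdot |W|$, where $|T|$ is a polynomial in $q$ of degree at most the Lie rank $r$ and $|W|$ is bounded by a constant depending only on the type of $G$. Combined with the standard lower bound $|G|\ge c\, q^{r+2N}$, where $N$ denotes the number of positive roots, this yields
\[
n_p(G) = \frac{|G|}{|\nor{G}{P}|} \ge c'\,q^{2N}.
\]
Since $N\ge 1$ for every simple Lie type group, we obtain $n_p(G)\ge c' q^2$, with equality of exponent realised only by the rank-$1$ families $\PSL_2(q)$, $\PSU_3(q)$, $^2B_2(q)$ and $^2G_2(q)$. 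Therefore $n_p(G)\le x$ forces $q\le C x^{1/2}$, and the number of prime powers in that range is $O(x^{1/2}/\log x)$ by the prime-number theorem; higher-rank families produce at most $O(x^{1/(2N)}/\log x)$ values, and these contributions sum to a quantity of the same order.

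Summing over the finitely many types of Lie groups thus gives $A_{Lie}(x)\le c_{Lie}\,x^{1/2}/\log x$, the constant $c_{Lie}$ being adjusted to absorb the behaviour for bounded $x$. The hardest point will be the careful handling of the finite set of primes $p$ dividing the Weyl group order, where the Sylow $p$-subgroup may fail to lie inside a single torus; in those exceptional configurations one must estimate $|\nor{G}{P}|$ directly via the $p$-local structure of $G$, but the same order of magnitude for $n_p(G)$ persists.
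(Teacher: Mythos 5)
Your overall strategy---bound $n_p(G)$ below by a power of $q$, then count prime powers via the prime-number theorem---is the same as the paper's, and your defining-characteristic case and the final summation over types and ranks are fine. But the route you take to the lower bound in cross-characteristic is genuinely different from the paper's and, as written, has a gap.

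The paper avoids any $p$-local analysis entirely: it observes that $n_p(K)=|K:\nor{K}{P}|$ is the degree of a transitive permutation representation of $K$, hence is at least the minimal permutation degree $m(K)$, and then quotes the explicit lower bound $m(K)\ge\kappa q^r$ from the literature (\cite[Table~4]{PS}). This gives $q^r\le x/\kappa$, which for $r\ge 2$ already yields $q\le (x/\kappa)^{1/2}$ and suffices; the single remaining rank-$1$ family $\PSL_2(q)$ is then treated by the explicit formula $n_p=q(q\pm1)/\gcd(2,q-1)$ (or $1+q$ in defining characteristic). Your approach instead aims for the stronger bound $n_p(G)\ge c'q^{2N}$ via the $p$-local structure. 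The crucial step is the inference ``$P\le\nor{G}{T}$, hence $\nor{G}{P}\le\nor{G}{T}$.'' This implication does not follow: containment of $P$ in $\nor{G}{T}$ says nothing about where $\nor{G}{P}$ sits. What one actually needs is that $T$ be recoverable from $P$, e.g.\ $\cent{G}{P}=T$, in which case $\nor{G}{P}\le\nor{G}{\cent{G}{P}}=\nor{G}{T}$. That identity holds for ``large'' cross-characteristic primes but fails precisely in the cases you flag at the end (when $p$ divides $|W|$, or more generally when $P$ is nonabelian or $\cent{G}{P}\supsetneq T$), and ``the same order of magnitude persists'' there is asserted rather than proved. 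There is also a small slip: $\PSU_3(q)$ and ${}^2B_2(q)$, ${}^2G_2(q)$ do not have $N=1$ (for instance $N=3$ in type $A_2$), so the exponent $2N$ is not $2$ for those families; it just happens that the count still works because larger $N$ only helps.

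In short: same counting skeleton, but the paper's use of minimal permutation degrees is a cleaner and fully rigorous substitute for your $p$-local estimate. Your version would give a sharper exponent when it works, but to make it a proof you would need to either cite a theorem controlling $|\nor{G}{P}|$ uniformly (e.g.\ via the Broué--Malle $\Phi_d$-torus theory together with a bound on $\nor{G}{P}/\cent{G}{P}$) or fall back on the permutation-degree argument for the bad primes.
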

\begin{proof}
Let $m\in A_{\mathrm{Lie}}$. Thus there exists a non-abelian simple group
of Lie type $K={}^d\Sigma_r(q)$ such that
$m=|K:{\bf N}_K(P)|\le x$. Here we use the notation of~\cite{GLS}, so
$d\in\{1,2,3\}$, $\Sigma\in\{A,B,C,D,E,F,G\}$ describes the type, and $r$
is the Lie rank of $K$.

Let $m(K)$ denote the minimal degree of a permutation representation of
$K$. The value of $m(K)$ is explicitly known in the literature. We use
\cite[Table~4]{PS}, since this table takes into account some mistakes
present in earlier tabulations of this value. We deduce that there exists
a positive constant $\kappa$ such that
$m(K)\ge \kappa q^{r}$.

Assume first that $r\ge 2$, that is, $K$ is not $\mathrm{PSL}_2(q)$. Thus $\kappa q^r\le m(K)\le x$. We let $N(x)$ denote the number of solutions $(r,q)$ to the inequality $\kappa q^r\le x$, where $q$ is a prime power and $r\ge 2$. We claim that there exists a constant $C$ such that
\begin{equation}\label{value N}
N(x)\le C\frac{x^{1/2}}{\log x},
\end{equation}
for every $x>1$.

Let $A(y)$ denote the number of prime powers $\le y$, so
\[
A(y)=\sum_{k\ge 1} \pi\!\left(y^{1/k}\right),
\]
where $\pi(t)$ is the prime counting function.  Using the standard bound
\[
\pi(t)\le C_1\,\frac{t}{\log t}\qquad(t\ge 2),
\]
we first isolate the $k=1$ term:
\[
\pi(y) \le C_1\,\frac{y}{\log y}.
\]

For $k\ge 2$ we avoid estimating $\log(y^{1/k})$ directly.  
Since $\pi$ is increasing, we have
\[
\pi\!\left(y^{1/k}\right) \le \pi(\sqrt{y})\qquad(k\ge 2),
\]
and $\pi(y^{1/k})=0$ once $y^{1/k}<2$, which occurs for all
$k > \log_2 y$.  Hence
\[
\sum_{k\ge 2}\pi\!\left(y^{1/k}\right)
\;\le\;
\sum_{2\le k\le \log_2 y} \pi(\sqrt{y})
\;\le\;
(\log_2 y)\,\pi(\sqrt{y}).
\]

Applying the prime number bound at $t=\sqrt{y}$ gives
\[
\pi(\sqrt{y})
\;\le\;
C_1\,\frac{\sqrt{y}}{\log \sqrt{y}}
=
\frac{2C_1\sqrt{y}}{\log y}.
\]

Therefore
\[
A(y)
=
\sum_{k\ge 1}\pi\!\left(y^{1/k}\right)
\le
C_1\,\frac{y}{\log y}
+
\frac{2C_1\sqrt{y}\,\log_2 y}{\log y}.
\]

Since
\[
\frac{2C_1\sqrt{y}\,\log_2 y}{\log y}
= o\!\left(\frac{y}{\log y}\right)\qquad (y\to\infty),
\]
there exists $y_0$ and $C_2>0$ such that for all $y\ge y_0$,
\begin{equation}\label{eq:Aybound}
	A(y)\le C_2\,\frac{y}{\log y}.
\end{equation}

Now consider $N(x)$. For fixed $r\ge 2$, the condition $\kappa q^r\le x$ implies $q\le (x/\kappa)^{1/r}$, hence by \eqref{eq:Aybound},
\[
N(x)
\le \sum_{r=2}^{\lfloor \log_2 (x/\kappa)\rfloor}
A\!\left((x/\kappa)^{1/r}\right)
\le \sum_{r=2}^{\lfloor \log_2 (x/\kappa)\rfloor}
C_2\,\frac{(x/\kappa)^{1/r}}{\log((x/\kappa)^{1/r})}.
\]
Since $\log(x^{1/r})=\frac{1}{r}\log x$, we may rewrite this as
\[
N(x)
\le
\frac{C_2}{\log (x/\kappa)}
\sum_{r=2}^{\lfloor \log_2 (x/\kappa)\rfloor}
r\,(x/\kappa)^{1/r}.
\]

The dominant contribution arises from $r=2$:
\[
\text{(term with $r=2$)}
=
\frac{2C_2\,(x/\kappa)^{1/2}}{\log (x/\kappa)}.
\]
For $r\ge 3$, we have $(x/\kappa)^{1/r}\le (x/\kappa)^{1/3}$ and
\[
\sum_{r=3}^{\lfloor \log_2 (x/\kappa)\rfloor} r
\le \frac{1}{2}(\log_2 (x/\kappa))^2,
\]
so
\[
\sum_{r=3}^{\lfloor \log_2 (x/\kappa)\rfloor}
\frac{C_2 r\,(x/\kappa)^{1/r}}{\log x}
\le
\frac{C_2 (x/\kappa)^{1/3}}{\log (x/\kappa)}
\cdot \frac{1}{2}(\log_2 (x/\kappa))^2
= \frac{C_2}{2}\,(x/\kappa)^{1/3}\log (x/\kappa).
\]
Since $x^{1/3}\log x = o\!\left(\dfrac{x^{1/2}}{\log x}\right)$
as $x\to\infty$, this contribution is of lower order.  This finally proves our claim~\eqref{value N}.

In particular, if $r\ge 2$, then we have at most $3\cdot 7\cdot N(x)\le 21 Cx^{1/2}/\log x$ choices for $K$, because $3$ accounts for the possibilities of $d$ and $7$ accounts for the possibilities of $\Sigma$ in $K={^d\Sigma}_r(q)$.

Finally, assume $r=1$. In this case, if $\gcd(p,q)=1$, then the number of Sylow $p$-subgroups is $q(q\pm 1)/\gcd(2,q-1)$. Since this number is also a linear factor of $q^2$, we also see that the contribution of these groups has order of magnitude $Cx^{1/2}/\log x$.
If $p\mid q$, then the number of Sylow $p$-subgroups in $\mathrm{PSL}_2(q)$ is $1+q$. As $p\mid q$, we have $q=p^\ell$, for some positive integer $\ell$. As $1+q\le x$, we deduce that $\ell\le\log x$. Therefore the contribution of these groups is negligible.
\end{proof}

\section{Proof of Theorem~$\ref{thrm}$}
In this section, we combine the results in the previous sections and we prove Theorem~\ref{thrm}. The proof will follow from a technical proposition. 

\begin{proposition}\label{prop}
Let $A\subset\mathbb{N}$ satisfy
$A(x)\ \le cx(\log x)^{\alpha}$ when $x\ge 2$,
for some constants $c>0$ and $-1<\alpha<0$.  Let $X\subset\mathbb{N}$ satisfy \[X(x) \le
c'\,\frac{x^{1/2}}{\log x}
\qquad(x\ge 2),
\]
and let $B$ be the multiplicative semigroup generated by $X$.  Set
$$C=A\cdot B=\{ab:\ a\in A,\ b\in B\}.$$
Then
\[
C(x)\ \ll\ x(\log x)^{\alpha}
\qquad\text{as }x\to\infty,
\]
and in particular $C$ has natural density $0$.
\end{proposition}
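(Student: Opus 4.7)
The plan is to bound $C(x)$ by counting pairs and swapping the order of summation:
\[
C(x)\;\le\;\#\{(a,b)\in A\times B : ab\le x\}
\;=\;\sum_{a\in A,\,a\le x} B(x/a).
\]
To exploit this, I first need a polynomial-type upper bound $B(y)\ll y^s$ for some fixed $s<1$, which I will then combine with a partial-summation estimate of the twisted Dirichlet sum $\sum_{a\in A,\,a\le x}a^{-s}$. The two contributions will balance so that $x^s\cdot x^{1-s}(\log x)^\alpha = x(\log x)^\alpha$ comes out correctly.

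The first step is to bound $B$. Fix any $s\in(1/2,1)$, say $s=3/4$. Since $B$ is the multiplicative semigroup generated by $X$, every $b\in B$ is expressible as a product of elements of $X$ and thus occurs (at least once) among the terms of the formal expansion
\[
\prod_{m\in X}\frac{1}{1-m^{-s}}
\;=\;\sum_{\text{multisets of }X}(\text{product})^{-s}.
\]
Hence $F(s):=\sum_{b\in B}b^{-s}\le\prod_{m\in X}(1-m^{-s})^{-1}$. The hypothesis $X(t)\le c't^{1/2}/\log t$ gives, via Abel summation, $\sum_{m\in X}m^{-s}<\infty$ for every $s>1/2$ (the key integral is $\int_2^{\infty}X(t)t^{-s-1}dt$, bounded by $\int dt/(t^{s+1/2}\log t)$ which converges). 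Combined with $\sum_{k\ge 2}(1/k)\sum_m m^{-ks}<\infty$ (since $2s>1$), the Euler product is finite, so $F(s)<\infty$ and therefore $B(y)\le y^s F(s)=:K_s\,y^s$ for all $y\ge 1$.

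The second step estimates $\sum_{a\in A,\,a\le x}a^{-s}$ by partial summation from $A(t)\le ct(\log t)^\alpha$. Substituting $u=\log t$ in the resulting integral (or a single integration by parts) yields
\[
\sum_{a\in A,\,a\le x}a^{-s}
\;\ll\;x^{1-s}(\log x)^\alpha
\qquad(x\to\infty),
\]
where the implied constant depends on $s,c,\alpha$. Plugging in:
\[
C(x)\;\le\;K_s\,x^{s}\sum_{a\in A,\,a\le x}a^{-s}
\;=\;O\!\bigl(x(\log x)^\alpha\bigr),
\]
which is the desired estimate, and natural density zero follows at once because $\alpha<0$.

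The only mildly technical point is the convergence of the Euler product for $F(s)$, but via $-\log(1-u)\le 2u$ for small $u$ this reduces entirely to convergence of $\sum_{m\in X}m^{-s}$, which is guaranteed by the hypothesis on $X$. Everything else is routine bookkeeping: one fixes a convenient $s\in(1/2,1)$ and performs standard Abel summations. I do not expect any serious obstacle; the logarithmic improvement in the hypothesis on $X$ is not even strictly needed for this argument (the bare bound $X(t)\ll t^{1/2}$ already suffices for $\sum m^{-s}$ to converge at any $s>1/2$), so the method has some built-in slack.
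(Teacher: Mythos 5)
Your proof is correct, but it takes a genuinely different route from the paper. The paper's argument swaps the roles of $A$ and $B$ in the initial bound: it writes $C(x)\le\sum_{b\in B,\,b\le x}A(x/b)$, establishes $\sum_{b\in B}1/b<\infty$ (that is, convergence of the Dirichlet series of $B$ at $s=1$, via the enumeration $X=\{x_1<x_2<\cdots\}$ and the inequality $x_n\gtrsim n^2(\log n)^2$), and then splits the sum over $b$ at $b=x^{1/2}$: the range $b\le x^{1/2}$ yields the main term because $\log(x/b)\ge\tfrac12\log x$, and the range $b>x^{1/2}$ is a convergent tail hence $o(1)$. You instead write $C(x)\le\sum_{a\in A,\,a\le x}B(x/a)$, establish convergence of the Dirichlet series of $B$ at some fixed $s\in(1/2,1)$, convert that to the Chebyshev-type bound $B(y)\le F(s)\,y^s$, and then close via the partial-summation estimate $\sum_{a\le x}a^{-s}\ll x^{1-s}(\log x)^\alpha$. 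Both methods rest on the same structural fact that $X$ (hence $B$) is thin; the paper uses this only through absolute convergence of $\sum 1/b$, whereas your version exposes more explicitly that the abscissa of convergence for $B$ lies below $1$, which is what ultimately makes $B$ negligible against $A$. Your version is slightly more flexible (it works with just $X(t)\ll t^{1/2}$, without the $\log$-saving, as you note), at the cost of a mildly heavier partial-summation computation. One small point worth flagging: if $1\in X$ the Euler factor $(1-1^{-s})^{-1}$ is undefined, so you should discard the element $1$ from $X$ before forming the product (which does not change $B$); the paper's enumeration $x_1<x_2<\cdots$ implicitly faces the same issue via $x_1=1$, so this is not a substantive difference.
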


\begin{proof}
We write $X=\{x_1<x_2<\cdots\}$.  
From $n=X(x_n)\le c'x_n^{1/2}/\log x_n$ we obtain the inequality
\[
x_n \ \ge\ \frac{n^2}{c'^2}(\log x_n)^2.
\]
For a suitable $n_0,$ for every $n\geq n_0$, we have $\log x_n\ge \frac12\log n$, hence
\[
x_n \ \ge\ \frac{n^2(\log n)^2}{4c'^2}
\qquad(n\ge n_0).
\]
Consequently,
\begin{equation}\label{eq:tailX}
\sum_{n\ge n_0}\frac{1}{x_n}
\ \le\ 
4c'^2\sum_{n\ge n_0}\frac{1}{n^2(\log n)^2}
<\infty.
\end{equation}
Since $B$ is the multiplicative semigroup generated by $X$, we have
\[
\sum_{b\in B}\frac{1}{b}
=
\prod_{n=1}^{\infty}\left(1+\frac1{x_n}+\frac1{x_n^2}+\cdots\right)
=
\prod_{n=1}^{\infty}\frac{1}{1-1/x_n},
\]
and for $n$ large the assumption $x_n\to\infty$ implies $\frac {1}{x_n}\le \tfrac12$.
Using $\log\frac{1}{1-t}\le 2t$ for $0<t\le1/2$ and \eqref{eq:tailX}, we obtain
\[
\log\left(\sum_{b\in B}\frac{1}{b}\right)
=
\sum_n \log\left(\frac{1}{1-1/x_n}\right)
\le
2\sum_{n}\frac{1}{x_n}
<\infty.
\]
Hence 
\begin{equation}\label{eq:SB}
\sum_{b\in B}\frac{1}{b} <\infty.
\end{equation}

Every $m\in C$ with $m\le x$ has a representation $m=ab$ with
$a\in A$, $b\in B$, and $b\le x$.  Thus
\[
C(x)
\le 
\sum_{\substack{b\in B\\ b\le x}}A\!\left(\frac{x}{b}\right)
\le
\sum_{\substack{b\in B\\ b\le x}}
c\,\frac{x/b}{(\log(x/b))^{-\alpha}}
=
c\,x\sum_{\substack{b\in B\\ b\le x}}
\frac{1}{b\,(\log(x/b))^{-\alpha}}.
\]
Write $\beta=-\alpha\in(0,1)$.  Split the sum at $b=x^{1/2}$:

\[
C(x)=C_1(x)+C_2(x)
\]
with
\[
C_1(x)
=c\,x\sum_{\substack{b\in B\\ b\le x^{1/2}}}
\frac{1}{b(\log(x/b))^{\beta}},
\qquad
C_2(x)
=c\,x\sum_{\substack{b\in B\\ x^{1/2}<b\le x}}
\frac{1}{b(\log(x/b))^{\beta}}.
\]

If $b\le x^{1/2}$, then $\log(x/b)\ge\tfrac12\log x$, hence
\begin{equation}\label{eq:clubsuits}
C_1(x)
\le
c\,x\,\frac{2^{\beta}}{(\log x)^{\beta}}
\sum_{b\in B}\frac{1}{b}
= c\,2^{\beta}\,\frac{x}{(\log x)^{\beta}}\sum_{b\in B}\frac 1b
= c\,2^{-\alpha}\,\,x(\log x)^{\alpha}\sum_{b\in B}\frac 1b.
\end{equation}
For $b>x^{1/2}$ we use $(\log(x/b))^{-\beta}\le1$ and obtain
\[
C_2(x)
\le
c\,x\sum_{\substack{b\in B\\ b>x^{1/2}}}\frac{1}{b}
=
c\,x\cdot T(x),
\qquad\textrm{where }
T(x)=\sum_{\substack{b\in B\\ b>x^{1/2}}}\frac{1}{b}.
\]
Since $\sum_{b\in B}1/b<\infty$ by~\eqref{eq:SB}, the tail $T(x)\to0$ as $x\to\infty$.
Therefore,
\begin{equation}\label{eq:spadesuit}
\frac{C_2(x)}{x(\log x)^{\alpha}}
=
c\,T(x)\,(\log x)^{-\alpha}
\longrightarrow 0
\qquad(x\to\infty),
\end{equation}
because $-\alpha>0$.

Combining~\eqref{eq:tailX},~\eqref{eq:SB}, \eqref{eq:clubsuits} and
\eqref{eq:spadesuit},
for all sufficiently large $x$ we obtain
\[
C(x)
\le
c\,2^{-\alpha}\,\sum_{b\in B}\frac 1b\,x(\log x)^{\alpha}
+ o\bigl(x(\log x)^{\alpha}\bigr)
\qquad(x\to\infty).
\]
In particular,
\[
C(x)\ \ll\ x(\log x)^{\alpha},
\qquad
\frac{C(x)}{x}\ll (\log x)^{\alpha}\xrightarrow[x\to\infty]{}0,
\]
so $C$ has natural density $0$.
\end{proof}
\begin{proof}[Proof of Theorem~$\ref{thrm}$]The proof follows immediately from the main result of~\cite{mhall} and
Proposition~\ref{prop}, applied with $A$ the set of Sylow $p$-numbers of
solvable groups and $X$ the set of Sylow $p$-numbers of non-abelian
simple groups. The hypotheses of Proposition~\ref{prop} concerning $A(x)$
are satisfied by Theorem~\ref{thrms}, and the hypotheses of
Proposition~\ref{prop} concerning $X(x)$ are satisfied by
Lemmas~\ref{l:alt} and~\ref{l:Lie}.
\end{proof}


\begin{thebibliography}{30}
\bibitem{bernays}P.~Bernays, \"{U}ber die Darstellung von positiven, ganzen Zahlen durch die primitiven, bin\"{a}ren quadratischen
Formen einer nicht-quadratischen Diskriminante, Ph.D. dissertation, Georg-August-Universit\"{a}t G\"{o}ttingen
(1912).
\bibitem{Cox}D.~Cox, \textit{Primes of the form $x^2+ny^2$, Class field theory and complex multiplication, Third edition with solutions}, AMS Chelsea Publishing, American Mathematical Society, Providence, Rhode Island, 2022.
\bibitem{es}P.~Erd\H{o}s, S.~Szekeres, \"{U}ber die anzahl der abelschen gruppen gegebener ordnung und \"{u}ber ein verwandt zahlentheoretisch problem, \textit{Acta Univ. Szeged.} \textbf{7}, (1934-1935), 95--102.
\bibitem{GLS}D. Gorenstein, R. Lyons, and R. Solomon,
\textit{The Classification of the Finite Simple Groups, Volume~3},
Mathematical Surveys and Monographs, vol.~40.3,
American Mathematical Society, 1998.
\bibitem{PS}S.~Guest, J.~Morris, C.~E.~Praeger, P.~Spiga, On the maximum order of elements of finite almost simple groups and primitive permutation groups, \textit{Trans. Amer. Math. Soc.} \textbf{367} (2015), 7665--7694.
\bibitem{mhall} M. Hall, On the number of Sylow subgroups in a finite group, J. Algebra 7 (1967), 363--371. 
\bibitem{phall}  P. Hall
A Note on Soluble Groups,
\textit{J. London Math. Soc.} \textbf{3} (1928), no. 2, 98--105.
\bibitem{landau}E.~Landau, \"{U}ber die Einteilung der positiven ganzen Zahlen in vier Klassen nach der Mindeteszahl der zu ihrer
additiven Zusammensetzung erforderlichen Quadrate, \textit{Arch. Math Phys.} \textbf{13}, (1908), 305--312.
\bibitem{landau1}E. Landau, \textit{Handbuch der Lehre von der verteilung der Primzahlen}, 3rd edition, Chelsea Publ. Co., pp. 668--669,
1974.
\bibitem{sambale} B. Sambale,
Pseudo Sylow numbers,
\textit{Amer. Math. Monthly} \textbf{126} (2019), no. 1, 60--65.
\end{thebibliography}
\end{document}